\newcommand{\IR}{\mathbb R}
\newcommand{\IZ}{\mathbb Z}
\newcommand{\IQ}{\mathbb Q}
\newcommand{\IN}{\mathbb N}
\newcommand{\pr}{\mathrm{pr}}
\newcommand{\Ra}{\Rightarrow}
\newcommand{\w}{\omega}
\newtheorem{theorem}{Theorem}
\newtheorem{lemma}{Lemma}
\newtheorem{corollary}{Corollary}
\newtheorem{claim}{Claim}
\newtheorem{problem}{Problem}
\newtheorem{proposition}{Proposition}
\theoremstyle{definition}
\newtheorem{example}{Example}
\newtheorem{remark}{Remark}
\title[The continuity of additive and convex functions ...]{The continuity of additive and convex functions,\\ which are upper bounded on non-flat continua in $\IR^n$}
\author{Taras Banakh, Eliza Jab\l o\'nska, Wojciech Jab\l o\'nski}
\address{T. Banakh: Ivan Franko University of Lviv (Ukraine) and Jan Kochanowski University in Kielce (Poland)}
\email{t.o.banakh@gmail.com}
\address{E. Jab\l o\'nska: Department of Discrete Mathematics, Rzesz\'ow University of Technology,
Powstanc\'ow Warszawy~12, 35-959 Rzesz\'ow, Poland}
\email{elizapie@prz.edu.pl}
\address{W. Jab\l o\'{n}ski: Department of Mathematical Modeling, Rzesz\'ow University of Technology,
Powstanc\'ow War\-sza\-wy~12, 35-959 Rzesz\'ow, Poland}
\email{wojtek@prz.edu.pl}
\keywords{Euclidean space, additive function, mid-convex function, continuity, continuum, analytic set, Ger-Kuczma classes}
\subjclass{26B05, 26B25, 54C05, 54C30, 54D05}
\begin{document}

\begin{abstract} We prove that for a continuum $K\subset \IR^n$ the sum $K^{+n}$ of $n$ copies of $K$ has non-empty interior in $\IR^n$ if and only if $K$ is not flat in the sense that the affine hull of $K$ coincides with $\IR^n$. Moreover, if $K$ is locally connected and each non-empty open subset of $K$ is not flat, then for any (analytic) non-meager subset $A\subset K$ the sum $A^{+n}$ of $n$ copies of $A$ is not meager in $\IR^n$ (and then the sum $A^{+2n}$ of $2n$ copies of the analytic set $A$ has non-empty interior in $\IR^n$ and the set $(A-A)^{+n}$ is a neighborhood of zero in $\IR^n$). This implies that a mid-convex function $f:D\to\IR$, defined on an open convex subset $D\subset\IR^n$ is continuous if it is upper bounded on some non-flat continuum in $D$ or on a non-meager analytic subset of a locally connected nowhere flat subset of $D$.
\end{abstract}
\maketitle

Let $X$ be a linear topological space over the field of real numbers. A function $f:X\to\IR$ is called {\em additive} if $f(x+y)=f(x)+f(y)$ for all $x,y\in X$.

A function $f:D\to\IR$ defined on a convex subset $D\subset X$ is called {\em mid-convex} if $f\big(\tfrac{x+y}{2}\big)\leq\frac{f(x)+f(y)}2$ for all $x,y\in D$.

Many classical results concerning additive or mid-convex functions state that the  boundedness of such functions on ``sufficiently large" sets implies their continuity. That is why Ger and Kuczma~\cite{GerKuczma} introduced the following three families of sets in $X$:
\begin{itemize}
\item the family $\mathcal A(X)$ of all subsets $T\subset X$ such that any mid-convex function $f:D\to\IR$ defined on a~convex open subset $D\subset X$ containing $T$ is continuous if $\sup f(T)<\infty$;
\item the family $\mathcal B(X)$ of all subsets $T\subset X$ such that any additive function $f:X\to\IR$ with $\sup f(T)<\infty$ is continuous;
\item the family $\mathcal C(X)$ of all subsets $T\subset X$ such that any additive function $f:X\to\IR$ is continuous if the set $f(T)$ in bounded in $\IR$.
\end{itemize}
It is clear that
\begin{equation}\label{0x}
\mathcal A(X)\subset\mathcal B(X)\subset\mathcal C(X).
\end{equation}
By the example of Erd\H os \cite{Erdos} (discussed in \cite{GerKuczma}) the classes $\mathcal B(X)$ and $\mathcal C(X)$ are not equal even if $X=\IR^n$, $n\in\IN$. On the other hand, Ger and Kominek \cite{GerKominek} proved that $\mathcal A(X)=\mathcal B(X)$ for any Baire topological vector space $X$. In particular, $\mathcal A(\IR^n)=\mathcal B(\IR^n)$ for every $n\in\IN$ (cf. \cite{MEK}).

There are lots of papers devoted to the problem of recognizing sets in the families $\mathcal A(X)$, $\mathcal B(X)$ or $\mathcal C(X)$, see e.g. \cite{BJ}, \cite{Ger}, \cite{Kurepa}, \cite{Mehdi}, \cite{Ostrowski}.  The classical results concerning mid-convex functions (namely, Bernstein-Doetsch Theorem \cite{BD} and its generalizations, see e.g.~\cite{TTZ}) imply that a subset $T$ with non-empty interior in a topological vector space $X$ belongs to the families $\mathcal A(X)\subset\mathcal B(X)\subset\mathcal C(X)$. By (the proofs) of  Lemma 9.2.1 and Theorem 9.2.5 in \cite{Kuczma2}, a subset $T$ of a~topological vector space $X$ belongs to a family $\mathcal K\in\{\mathcal A(X),\mathcal B(X),\mathcal C(X)\}$ if and only if for some $n\in\IN$ its $n$-fold sum $$T^{+n}=\underbrace{T+\dots+T}_{n \text{\rm\, times}}$$ belongs to the family $\mathcal K$.
Combining these two facts, we obtain the following well-known folklore result.

\begin{theorem}\label{tx}
A subset $T$ of a topological vector space $X$ belongs to the families $\mathcal A(X)\subset\mathcal B(X)\subset\mathcal C(X)$ if for some $n\in\IN$ its $n$-fold sum $T^{+n}$ has non-empty interior in $X$.
\end{theorem}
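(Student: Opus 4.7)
The plan is to combine the two facts recalled in the paragraph preceding the statement: the Bernstein-Doetsch theorem (and its counterpart for additive functions), which places every set with non-empty interior into $\mathcal A(X)\subset\mathcal B(X)\subset\mathcal C(X)$, and the Ger-Kuczma iteration principle, which transfers membership in each of the three classes from $T^{+n}$ back to $T$. First I would note that if $T^{+n}$ has non-empty interior then $T^{+n}\in\mathcal A(X)$ by Bernstein-Doetsch, and hence $T^{+n}\in\mathcal B(X)\cap\mathcal C(X)$ by the inclusions \eqref{0x}. It then remains, for each $\mathcal K\in\{\mathcal A(X),\mathcal B(X),\mathcal C(X)\}$, to establish the implication ``$T^{+n}\in\mathcal K\Rightarrow T\in\mathcal K$''.

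For the classes $\mathcal B(X)$ and $\mathcal C(X)$ the argument is essentially a one-liner: given an additive $f:X\to\IR$ with $\sup f(T)\le M$ (respectively $|f(T)|\le M$), additivity yields $f(t_1+\dots+t_n)=\sum_{i=1}^n f(t_i)\le nM$ (respectively $|\sum_{i=1}^n f(t_i)|\le nM$) for every choice of $t_1,\dots,t_n\in T$, so the corresponding bound is inherited by $T^{+n}$ and the assumption $T^{+n}\in\mathcal B(X)$ (respectively $\mathcal C(X)$) forces $f$ to be continuous.

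The main obstacle lies in the $\mathcal A(X)$-case, where linearity is unavailable. Given a mid-convex $f:D\to\IR$ on an open convex set $D\supset T$ with $\sup f(T)<\infty$, I would introduce the rescaled function $g:nD\to\IR$, $g(x)=n\,f(x/n)$; since $D$ is open convex we have $nD=D^{+n}\supset T^{+n}$ open convex, and mid-convexity passes from $f$ to $g$ by direct substitution. The rational Jensen inequality for $1/n$-averages, obtained by iterating mid-convexity and padding a power-of-two combination with copies of the mean (no extra hypothesis on $f$ is required), then yields
\[
g(t_1+\dots+t_n)=n\,f\!\left(\tfrac{t_1+\dots+t_n}{n}\right)\le f(t_1)+\dots+f(t_n)\le n\sup f(T)<\infty
\]
for all $t_1,\dots,t_n\in T$, so $\sup g(T^{+n})<\infty$. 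Since $T^{+n}\in\mathcal A(X)$, $g$ is continuous on $nD$, and therefore $f(x)=g(nx)/n$ is continuous on $D$. I expect this averaging trick, together with the rational Jensen inequality, to be the only genuinely non-trivial input; all other steps are either one-line computations or direct invocations of the classical facts cited in the excerpt.
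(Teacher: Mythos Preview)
Your proposal is correct and follows exactly the two-step scheme the paper uses: invoke Bernstein--Doetsch to place $T^{+n}$ in $\mathcal A(X)$, then apply the Ger--Kuczma iteration principle (which the paper merely cites from \cite[Lemma~9.2.1, Theorem~9.2.5]{Kuczma2}) to pull membership back to $T$. Your argument simply unpacks the proof of that iteration principle---the rescaling $g(x)=n f(x/n)$ on $nD=D^{+n}$ together with the rational Jensen inequality is precisely the content of the Kuczma reference---so the approach is the same, only more explicit.
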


Theorem~\ref{tx} has been used many times to show that various ``thin'' sets actually belong to the class $\mathcal A(X)$, $\mathcal B(X)$ or $\mathcal C(X)$. In this respect let us mention the following result of Ger \cite{Ger1}.

\begin{theorem}[Ger]\label{Ger}
Let $I\subset\IR$ be a nontrivial interval, $n\geq2$ and let $\varphi:I\to\mathbb{R}^n$ be a~$C^1$-smooth function defining in $\IR^n$ a curve which does not lie entirely in an $(n-1)$ dimensional affine hyperplane. Let $Z\subset I$ be a~set of positive Lebesgue measure. If one of the conditions is fulfilled:
\begin{enumerate}[(i)]
\item the Lebesgue measure of $Z\cap(a,b)$ is positive for every nontrivial interval $(a,b)\subset I$,
\item the determinant
$$
d(x_1,\ldots,x_n)=\left|\begin{array}{ccc}
\varphi_1'(x_1) & \ldots & \varphi_1'(x_n)\\
\vdots & \ddots & \vdots\\
\varphi_n'(x_1) & \ldots & \varphi_n'(x_n)
\end{array}\right|
$$
is non-zero for almost every $(x_1,\dots,x_n)$ in $I^n$,
\end{enumerate}
then the image $\varphi(Z)$ belongs to the class $\mathcal{A}(\mathbb{R}^n)=\mathcal B(\IR^n)$.
\end{theorem}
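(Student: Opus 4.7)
By Theorem~\ref{tx}, it suffices to produce some $k\in\IN$ for which $\varphi(Z)^{+k}$ has non-empty interior in $\IR^n$. The plan is to take $k=2n$, first showing that $\varphi(Z)^{+n}$ has positive $n$-dimensional Lebesgue measure, and then passing to non-empty interior via a Steinhaus-type sum lemma.

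Consider the $C^1$ summation map $\Phi:I^n\to\IR^n$, $\Phi(x_1,\ldots,x_n):=\varphi(x_1)+\cdots+\varphi(x_n)$. The $j$-th column of its differential is $\varphi'(x_j)$, so its Jacobian determinant equals $d(x_1,\ldots,x_n)$. The open set $U:=\{d\neq 0\}\subseteq I^n$ is nonempty: otherwise the vectors $\{\varphi'(x):x\in I\}$ would span a proper subspace $V\subsetneq\IR^n$, and the fundamental theorem of calculus would give $\varphi(I)\subseteq\varphi(x_0)+V$, contradicting the non-flatness hypothesis. My goal in this step is to find a cube $Q\subseteq U$ on which $\Phi$ is a diffeomorphism (by the inverse function theorem) and for which $Q\cap Z^n$ has positive Lebesgue measure; then $\Phi(Q\cap Z^n)\subseteq\varphi(Z)^{+n}$ has positive measure in $\IR^n$.

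Under hypothesis (ii), $U$ has full measure in $I^n$, so $U\cap Z^n$ has positive Lebesgue measure; pick a density point of $U\cap Z^n$ together with a surrounding cube $Q\subseteq U$ on which $\Phi$ is injective. Under hypothesis (i), $Z$ meets every subinterval of $I$ in positive measure, so for any product of intervals $Q=J_1\times\cdots\times J_n\subseteq U$ one has $|Q\cap Z^n|=\prod_{i=1}^n|Z\cap J_i|>0$; shrinking $Q$ if necessary makes $\Phi|_Q$ a diffeomorphism. In either case $\varphi(Z)^{+n}$ has positive Lebesgue measure.

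Finally, a standard Steinhaus-type argument shows that any $A\subseteq\IR^n$ of positive Lebesgue measure has $A+A$ with non-empty interior: at a density point $a^*$ of $A$, for small $r$ the set $E:=(A-a^*)\cap B(0,r)$ fills almost all of $B(0,r)$, and the estimate $|E\cap(w-E)|\geq 2|E|-|B(0,r)\cup B(w,r)|$ is strictly positive for $w$ in a small enough neighborhood of $0$, giving $w\in E+E$ and hence a neighborhood of $2a^*$ inside $A+A$. Applied to $A=\varphi(Z)^{+n}$, this shows $\varphi(Z)^{+2n}$ has non-empty interior, and Theorem~\ref{tx} yields $\varphi(Z)\in\mathcal{A}(\IR^n)=\mathcal{B}(\IR^n)$. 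The principal difficulty is the unification of the hypotheses in the middle step: (i) and (ii) express quite different notions of ``thickness'' for $Z$ and for $d$, yet both force a cube inside $U$ on which $Z^n$ has positive measure, after which the inverse function theorem and Steinhaus-type argument close the gap to interior in a routine manner.
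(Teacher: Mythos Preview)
The paper does not give a proof of this theorem: it is quoted as Ger's result from \cite{Ger1} and serves only as motivation for the paper's own topological counterpart (Theorem~\ref{t:main}). There is therefore nothing in the paper to compare your argument against.

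That said, your argument is sound and is essentially the classical measure-theoretic route one would expect for this statement. The key steps --- identifying the Jacobian of the summation map $\Phi$ with $d(x_1,\dots,x_n)$, showing $U=\{d\neq 0\}$ is nonempty via the non-flatness hypothesis, using either (i) or (ii) to locate a cube $Q\subset U$ with $|Q\cap Z^n|>0$, pushing positive measure forward through the local diffeomorphism, and then applying a Steinhaus-type argument to pass from positive measure of $\varphi(Z)^{+n}$ to nonempty interior of $\varphi(Z)^{+2n}$ --- are all correct as written. Your justification that $U\neq\emptyset$ (if $d\equiv 0$ then $\mathrm{span}\{\varphi'(x):x\in I\}\subsetneq\IR^n$, forcing $\varphi(I)$ into an affine hyperplane) is the right one, and the density-point version of Steinhaus for $A+A$ is handled cleanly. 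This is presumably close in spirit to Ger's original proof, but the paper under review does not reproduce it.
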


In this paper we prove a topological counterpart of Ger's Theorem~\ref{Ger}.

A subset $A$ of a topological vector space $X$ is defined to be
\begin{itemize}
\item {\em flat} if the affine hull of $A$ is nowhere dense in $X$;
\item {\em nowhere flat} if each non-empty relatively open subset $U\subset A$ is not flat in $X$.
\end{itemize}

By a {\em continuum} we understand a connected compact metrizable space.

\begin{theorem}\label{t:main} Let $n\in\IN$. For any non-flat continuum $K\subset\IR^n$ its $n$-fold sum $K^{+n}$ has non-empty interior in $\IR^n$ and hence $K$ belongs to the class $\mathcal A(\IR^n)=\mathcal B(\IR^n)$. Moreover, if $K$ is locally connected and nowhere flat in $\IR^n$, then for any non-meager analytic subspace $A$ of $K$ the $2n$-fold sum $A^{+2n}$ has non-empty interior in $\IR^n$, which implies that $A\in\mathcal A(\IR^n)=\mathcal B(\IR^n)$.
\end{theorem}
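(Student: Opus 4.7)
For the first assertion I argue by induction on $n$. The case $n=1$ is immediate, since a non-flat continuum in $\IR$ is a non-degenerate closed interval. For the inductive step, let $K\subset\IR^n$ be a non-flat continuum; choose affinely independent $p_0,p_1,\ldots,p_n\in K$, translate so $p_0=0$, and write $v_i:=p_i$ for the resulting linear basis. Let $\pi\colon\IR^n\to H$ be the projection onto $H:=\operatorname{span}(v_1,\ldots,v_{n-1})$ along $v_n$. The continuum $\pi(K)$ contains $0,v_1,\ldots,v_{n-1}$ and so is non-flat in $H$; by induction, $\pi(K)^{+(n-1)}$ has non-empty interior $V'\subset H$. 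Meanwhile, the linear functional $\rho$ dual to $v_n$ (vanishing on $H$) sends $K$ to a connected compact subset of $\IR$ containing $0$ and $1$, hence $\rho(K)\supset[0,1]$. Combining the $(n-1)$-dimensional ``width'' obtained from the inductive hypothesis with the transverse ``height'' gained by adding one more $K$-summand, one constructs an open box contained in $K^{+n}$.

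For the second assertion, let $A\subset K$ be non-meager analytic. Since analytic sets have the Baire property, $A\cap U$ is comeager in some non-empty open $U\subset K$; by local connectedness we may take $U$ connected. Then $\bar U$ is a continuum and, because $K$ is nowhere flat, $\bar U$ is non-flat, so by Part~1 the sum $\bar U^{+n}$ has non-empty interior. The core step is showing $A^{+n}$ is non-meager in $\IR^n$. Assume for contradiction that $A^{+n}\subset\bigcup_k F_k$ with each $F_k$ closed nowhere dense. By Kuratowski--Ulam, $(A\cap U)^n$ is comeager in $U^n$, hence $\bigcup_k\sigma^{-1}(F_k)\cap U^n$ is comeager in $U^n$, where $\sigma\colon K^n\to\IR^n$ denotes the sum map. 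The Baire category theorem then supplies an index $k$ and a non-empty basic open product $W_1\times\cdots\times W_n\subset\sigma^{-1}(F_k)\cap U^n$, giving $W_1+\cdots+W_n\subset F_k$. Shrinking each $W_i$ by local connectedness to a connected open $W'_i$, each $\overline{W'_i}$ is a non-flat continuum; a natural extension of Part~1's induction to sums of $n$ possibly different non-flat continua in $\IR^n$ shows $\overline{W'_1}+\cdots+\overline{W'_n}$ has non-empty interior. Since each $W'_i$ is dense in $\overline{W'_i}$, the set $W_1+\cdots+W_n$ is dense in a non-empty open subset of $\IR^n$ and cannot lie in the nowhere-dense $F_k$, a contradiction.

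Now $A^{+n}$ is analytic (as a continuous image of $A^n$) and non-meager, so by the Piccard--Pettis theorem both $A^{+n}+A^{+n}=A^{+2n}$ and $A^{+n}-A^{+n}=(A-A)^{+n}$ have non-empty interior in $\IR^n$; the latter contains $0$ and thus is a neighborhood of $0$. Theorem~\ref{tx} then yields $A\in\mathcal A(\IR^n)=\mathcal B(\IR^n)$. The main obstacle is the inductive lifting in Part~1: one must combine the $(n-1)$-dimensional interior in $H$ with the one-dimensional transverse freedom to produce a genuine open set in $\IR^n$, which is delicate because the fibers of $K^{+(n-1)}$ over $V'$ need not vary continuously; its resolution likely requires a carefully staged geometric construction or a selection theorem for continua.
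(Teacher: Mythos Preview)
Your Part~2 is essentially the paper's argument: reduce to showing $A^{+n}$ is non-meager by a Baire-category contradiction in the product, then invoke Pettis--Piccard. Two small comments. First, the step ``Baire category supplies $W_1\times\cdots\times W_n\subset\sigma^{-1}(F_k)$'' needs one more sentence: what you have is that the comeager set $(A\cap U)^n$ lies in $\bigcup_k\sigma^{-1}(F_k)$, so the closed sets $\sigma^{-1}(F_k)$ together with the (closed, nowhere dense) pieces of the meager complement cover the Baire space $U^n$, whence one $\sigma^{-1}(F_k)$ has interior. The paper sidesteps this by working directly in the dense Baire subspace $\prod_k B_k$ (invoking \cite[8.44]{Kechris} to know the product is Baire). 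Second, your appeal to ``a natural extension of Part~1'' is exactly the paper's Theorem~\ref{main}, which is stated for $n$ possibly different continua from the outset; the point is only that the \emph{proof} of that extension is where the difficulty lives.

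And that is the real gap, which you have correctly located. The inductive lifting in Part~1 does not close. Knowing that $\pi(K^{+(n-1)})\supset V'$ gives no control over the $v_n$-coordinates of the preimages: the fibre height over $h\in V'$ can jump arbitrarily with $h$, and there is no continuous selection $V'\to K^{+(n-1)}$. Adding one more copy of $K$ buys an interval of $v_n$-freedom, but that interval is anchored at an uncontrolled height and, worse, moving along $K$ in the $v_n$-direction simultaneously moves the $H$-coordinate. I do not see how to rescue this by any elementary selection or perturbation argument; the obstruction is not a technicality.

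The paper abandons induction entirely and proves Theorem~\ref{main} by a dimension-theoretic argument. After normalising so that $e_1,\dots,e_n$ is the standard basis, one thickens each $K_i$ to $\tilde K_i:=K_i+\{ke_i:|k|\le l\}$ and shows that the sum map $\Sigma:\prod_i\tilde K_i\to\IR^n$ covers a large cube $[-s,s]^n$. The key tool is the Holszty\'nski--Lifanov theorem (Proposition~\ref{t:HL}): in a product of continua, any family of separators between opposite faces has nonempty common intersection. If some $y\in[-s,s]^n$ were missed, then for each $k$ the set $\Sigma^{-1}(\{x:x_k=y_k\})$ would separate the $k$-faces $p_k^{-1}(\pm l e_k)$ of $\prod_i\tilde K_i$ (this is where the choice $l>s+(n-1)\delta$ is used), yet $\bigcap_k\Sigma^{-1}(\{x_k=y_k\})=\Sigma^{-1}(y)=\emptyset$, contradicting Holszty\'nski--Lifanov. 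This separator argument is precisely the ``$n$-dimensional connectedness'' input that your coordinatewise induction was reaching for but could not supply; it replaces the missing selection theorem with a global topological obstruction.
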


\begin{remark} The first part of Theorem~\ref{t:main} answers a problem posed by the last author in \cite{Jablonski3}.
\end{remark}

In the proof of Theorem~\ref{t:main} we shall apply a non-trivial result of Holszty\'nski \cite{Hol} and Lifanov \cite{Lif} on the dimension properties of products of continua.  Let us recall that a closed subset $S$ of a topological space $X$ is called a {\em separator} between subsets $A$ and $B$ of $X$ if $A$ and $B$ are contained in distinct connected components of the complement $X\setminus S$.

The following proposition is due to Holszty\'nski \cite{Hol} and Lifanov \cite{Lif} and is discussed by Engelking in \cite[1.8.K]{End}.

\begin{proposition}[Holszty\'nski, Lifanov]\label{t:HL} Let $K_1,\dots,K_n$ be continua and $K:=\prod_{i=1}^nK_i$ be their product. For every positive integer $i\le n$ let $a_i^-,a_i^+$ be two distinct points in $K_i$ and let $S_i$ be a separator between the sets $A_i^-:=\{(x_k)_{k=1}^n\in K:x_i=a_i^-\}$ and $A_i^+:=\{(x_k)_{k=1}^n\in K:x_i=a_i^+\}$ in $K$. Then the intersection $\bigcap_{i=1}^nS_i$ is not empty.
\end{proposition}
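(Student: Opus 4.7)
My plan is to derive the proposition from the classical Eilenberg--Otto--Alexandroff cube theorem (the statement that in $[-1,1]^n$ any $n$ separators between opposite coordinate faces have non-empty intersection), by reducing the product-of-continua case to the cube case through a comparison of two maps $K \to [-1,1]^n$.

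Arguing by contradiction, assume $\bigcap_{i=1}^n S_i = \emptyset$. For each $i$ write $K \setminus S_i = U_i^- \sqcup U_i^+$ with disjoint open sets $A_i^\pm \subset U_i^\pm$, and use the normality of the compact metric $K$ (incorporating the distance function to $S_i$ to enforce strict inequalities) to construct a continuous $f_i \colon K \to [-1,1]$ satisfying $f_i = \pm 1$ on $A_i^\pm$, $f_i < 0$ on $U_i^- \setminus A_i^-$, $f_i > 0$ on $U_i^+ \setminus A_i^+$, and $f_i^{-1}(0) = S_i$. The combined map $F = (f_1,\dots,f_n) \colon K \to [-1,1]^n$ then satisfies $F^{-1}(0) = \bigcap_i S_i = \emptyset$, while still sending each $A_i^\pm$ into the coordinate face $\{y_i = \pm 1\}$.

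The product structure of $K$ supplies a canonical comparison map: picking Urysohn functions $\phi_i \colon K_i \to [-1,1]$ with $\phi_i(a_i^\pm) = \pm 1$ and setting $\Phi(x_1,\dots,x_n) = (\phi_1(x_1),\dots,\phi_n(x_n))$, each $\phi_i(K_i)$ is a connected subset of $[-1,1]$ containing $\pm 1$, hence equals $[-1,1]$, so $\Phi \colon K \to [-1,1]^n$ is surjective and $\Phi^{-1}(0) \neq \emptyset$. Moreover $F$ and $\Phi$ agree in the $i$-th coordinate on $A_i^\pm$, so the straight-line homotopy $H_t = (1-t)\Phi + tF$ preserves the face condition $H_t(A_i^\pm) \subset \{y_i = \pm 1\}$ for every $t \in [0,1]$.

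The final step is to argue that no such face-preserving continuous map $K \to [-1,1]^n$ can avoid the origin; this is where the dimension-theoretic content sits. The honest route is via the pair map $\Phi \colon (K,\Pi) \to ([-1,1]^n, \partial [-1,1]^n)$, with $\Pi := \bigcup_i (A_i^- \cup A_i^+)$, which has non-zero degree in \v{C}ech cohomology because $\Phi^{-1}(y) = \prod_i \phi_i^{-1}(y_i)$ is non-empty for every $y$; the homotopy of pairs $H_t$ preserves this degree, so $F$ is also essential, contradicting $F^{-1}(0) = \emptyset$. Assembling this degree/cohomology argument cleanly for an arbitrary compact metrizable $K$ is the principal obstacle, since it encodes the full topological content of the cube theorem; by comparison the earlier constructions are routine applications of normality and of the connectedness of each $K_i$.
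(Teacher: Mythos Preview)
The paper does not prove this proposition at all; it is quoted from Holszty\'nski and Lifanov, with a pointer to Engelking's dimension-theory monograph \cite[1.8.K]{End} for discussion. So there is no ``paper's own proof'' to compare against. Your outline is in fact the standard route taken in those sources: build functions $f_i$ with $f_i^{-1}(0)=S_i$, assemble $F\colon K\to[-1,1]^n$ missing the origin, and contradict the essentiality of the product map $\Phi=\phi_1\times\cdots\times\phi_n$.

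Your constructions of the $f_i$, of $\Phi$, and of the straight-line homotopy $H_t$ are all fine. The weak point is exactly where you flag it, but the specific justification you offer there is not correct: you write that $\Phi$ ``has non-zero degree in \v Cech cohomology because $\Phi^{-1}(y)=\prod_i\phi_i^{-1}(y_i)$ is non-empty for every $y$.'' Mere surjectivity of a map from a compact metric space onto $[-1,1]^n$ does not force $\Phi^*\colon\check H^n([-1,1]^n,\partial[-1,1]^n)\to\check H^n(K,\Pi)$ to be non-zero; indeed, the whole point of the theorem is that $\dim K\ge n$, and that is equivalent to the existence of such an essential map, so one cannot get it for free from surjectivity. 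What actually makes $\Phi^*$ injective is the \emph{product structure}: by the relative K\"unneth formula (with field coefficients), $\Phi^*=\phi_1^*\otimes\cdots\otimes\phi_n^*$ on the generator of $\check H^n([-1,1]^n,\partial[-1,1]^n)\cong\bigotimes_i\check H^1([-1,1],\{\pm1\})$, and each factor $\phi_i^*\colon\check H^1([-1,1],\{\pm1\})\to\check H^1(K_i,\{a_i^-,a_i^+\})$ is injective precisely because $K_i$ is \emph{connected} (compare the connecting homomorphisms in the long exact sequences of the two pairs). This is where the hypothesis that each $K_i$ is a continuum actually enters, and it is the genuine content of the Holszty\'nski--Lifanov result; your write-up never invokes connectedness of the $K_i$ beyond the surjectivity of $\phi_i$, and that is not enough.

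A smaller point: your opening plan promises a \emph{reduction} to the Eilenberg--Otto cube theorem for $[-1,1]^n$, but what you then sketch is not a reduction --- it re-runs the essentiality argument directly on $K$. There is no evident way to push the separators $S_i\subset K$ forward along $\Phi$ to separators between opposite faces of the cube, so the cube theorem is not available as a black box here; rather, the cube case and the product-of-continua case share the same cohomological proof.
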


The principal ingredient in the proof of Theorem~\ref{t:main} is the following result, which can have an independent value.

\begin{theorem}\label{main} Let $K_1,\dots,K_n$ be continua in $\mathbb R^n$ containing the origin of $\IR^n$. Assume that each continuum $K_i$ contains a point $e_i$ such that the vectors $e_1,\dots,e_n$ are linearly independent. Then the sum $K:=K_1+\dots+K_n$ has non-empty interior in $\IR^n$ and the Lebesgue measure of $K$ is not smaller than the volume of the parallelotope $P:=[0,1]\cdot e_1+\dots+[0,1]\cdot e_n$.\end{theorem}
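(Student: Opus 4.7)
The plan is to reduce to the case where $e_1,\dots,e_n$ is the standard basis of $\IR^n$ via the linear map sending $e_i$ to the $i$-th coordinate vector; this scales both Lebesgue measure and $\mathrm{vol}(P)$ by the same factor $|\det L|$, so it suffices to prove $\mu_n(K_1+\dots+K_n)\ge 1$ and non-emptiness of the interior under the assumption $P=[0,1]^n$. Throughout, let $\pi_i:\IR^n\to\IR$ denote the $i$-th coordinate projection.

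I would then parametrize a subset of $K_1+\dots+K_n$ by $[0,1]^n$ using Proposition~\ref{t:HL}. Setting $K:=K_1\times\dots\times K_n$, for each $t\in(0,1)^n$ the closed sets $S_i(t):=\{x\in K:\pi_i(x_i)=t_i\}$ separate the faces $A_i^-=\{x:x_i=0\}$ and $A_i^+=\{x:x_i=e_i\}$, since $\pi_i(0)=0<t_i<1=\pi_i(e_i)$; by HL, $\bigcap_i S_i(t)\ne\emptyset$. Applying Kuratowski--Ryll-Nardzewski measurable selection coordinatewise, I choose Borel measurable $x_i:[0,1]\to K_i$ with $\pi_i(x_i(t))=t$ and define $T:[0,1]^n\to\IR^n$ by $T(t):=\sum_i x_i(t_i)$. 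Then $T([0,1]^n)\subseteq K_1+\dots+K_n$.

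The crucial structural feature is that $T(t)_j=t_j+\sum_{i\ne j}\pi_j(x_i(t_i))$: if the $x_i$ were $C^1$, the $k$-th column of the Jacobian matrix $DT(t)$ would equal $x_k'(t_k)$, and multilinearity of the determinant yields
\[
\int_{[0,1]^n}\det DT(t)\,dt \;=\; \det\bigl[\,{\textstyle\int_0^1 x_1'(t)\,dt,\dots,\int_0^1 x_n'(t)\,dt}\,\bigr] \;=\; \det[e_1,\dots,e_n] \;=\; 1.
\]
Combined with the fact that $T$ sends each vertex of $[0,1]^n$ to the corresponding vertex of $P$, a Brouwer-degree argument would show that $T$ has degree at least $1$ on a full-measure subset of $\mathrm{int}(P)$, yielding both $\mu_n(T([0,1]^n))\ge 1$ and non-emptiness of the interior of the image.

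The main obstacle is that a continuum $K_i$ need not be path-connected (for instance, the topologist's sine curve shows continuous sections of $\pi_i$ may fail to exist), so the Borel selections $x_i$ are generally not continuous and the Jacobian heuristic above is not literally meaningful. I would rigorize this by approximating each $x_i$ by a piecewise-linear path $x_i^{\varepsilon}:[0,1]\to\IR^n$ with $x_i^{\varepsilon}(0)=0$ and $x_i^{\varepsilon}(1)=e_i$ whose image lies within Hausdorff distance $\varepsilon$ of $K_i$ (constructed from an $\varepsilon$-net in the compact space $K_i$), applying the smooth Jacobian/degree reasoning to $T^{\varepsilon}:=\sum_i x_i^{\varepsilon}$, and passing to the limit $\varepsilon\to 0^{+}$ using compactness of $[0,1]^n$ together with closedness of $K_1+\dots+K_n$. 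Proposition~\ref{t:HL} reappears at this stage to guarantee that the topological degree of $T^{\varepsilon}$ remains nonzero on the appropriate region, preventing the image from collapsing in the limit.
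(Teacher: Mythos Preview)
Your approach is genuinely different from the paper's, and the Jacobian identity $\int_{[0,1]^n}\det DT^\varepsilon(t)\,dt=\det[e_1,\dots,e_n]=1$ (via multilinearity and $\int_0^1 (x_i^\varepsilon)'=e_i$) is correct and attractive. However, the step ``a Brouwer-degree argument would show that $T$ has degree at least $1$ on a full-measure subset of $\mathrm{int}(P)$'' is where the proposal breaks, and it is not a matter of missing detail: the claim is false. Take $n=2$, $\gamma_2(t)=(0,t)$, and let $\gamma_1$ be the piecewise-linear path $0\to(1,10)\to e_1$. Then $T(s,t)=\gamma_1(s)+(0,t)$ misses $(\tfrac12,\tfrac12)\in\mathrm{int}(P)$ entirely (the only $s$ with $\pi_1(\gamma_1(s))=\tfrac12$ gives $\gamma_1(s)=(\tfrac12,5)$, forcing $t=-\tfrac92$). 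In fact $T([0,1]^2)$ is a thin parallelogram of area exactly $1$, disjoint from most of $(0,1)^2$, so the degree is $0$ on a subset of positive measure inside $P$. The identity $\int\det DT^\varepsilon=1$ only yields $\int_{\IR^n}\deg(T^\varepsilon,[0,1]^n,y)\,dy=1$; without a sign constraint on the degree (which you do not have, since the approximating paths may backtrack in the $e_i$-direction), this gives no lower bound on $\lambda(T^\varepsilon([0,1]^n))$. Your final sentence concedes that Proposition~\ref{t:HL} must ``reappear'' to control the degree, but you have not said how, and the example shows that the image of $T^\varepsilon$ simply need not sit over $P$.

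For comparison, the paper sidesteps this boundary/degree difficulty by a periodization trick: it glues $2l+1$ translates of each $K_i$ end-to-end along the $e_i$-axis to form a long continuum $\tilde K_i=K_i+\{ke_i:|k|\le l\}$, and applies Holszty\'nski--Lifanov to the product $\prod_i\tilde K_i$ with separators $\Sigma^{-1}(\{x:\pr_k(x)=y_k\})$ to show directly that every point of the cube $[-s,s]^n$ lies in $K+Z$ for the finite lattice $Z$ (here $l>s+(n-1)\delta$ with $\delta$ a diameter bound). The measure bound then drops out of the counting inequality $(2s)^n\le |Z|\cdot\lambda(K)=(2l+1)^n\lambda(K)$, and the ratio $(2s/(2l+1))^n$ can be pushed arbitrarily close to $1$. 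The point is that after periodization the ``edge effects'' that ruin your degree claim become negligible relative to the core cube. (A side remark: your initial invocation of HL to obtain $\bigcap_i S_i(t)\ne\emptyset$ is unnecessary, since that intersection is simply the product $\prod_i(\pi_i^{-1}(t_i)\cap K_i)$, each factor nonempty by the intermediate value theorem applied to the connected set $K_i$.)
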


\begin{proof} After a suitable linear transformation of $\IR^n$, we can assume that $e_1,\dots,e_n$ coincide with the standard basis of $\IR^n$, which means that $e_1=(1,0,\dots,0)$, $e_2=(0,1,0,\dots,0)$, \dots, $e_n=(0,\dots,0,1)$.
In this case we should prove that the sum $K=K_1+\dots+K_n$ has non-empty interior in $\IR^n$ and the Lebesgue measure $\lambda(K)$ of $K$ is not smaller than the volume $\lambda(P)=1$ of the cube $P=[0,1]^n$.

On the space $\IR^n$ we consider the sup-norm $\|x\|=\max_{1\le i\le n}|x_i|$.
Let $\delta:=\max\{\|x\|:x\in\bigcup_{i=1}^n K_i\}$. Choose numbers $s,l\in\IN$ such that $l>s+(n-1)\delta$. Moreover, if $\lambda(K)<1$, then we can replace $s$ and $l$ by larger numbers and additionally assume that $\big(\frac{2s}{2l+1})^n>\lambda(K)$.

For every positive integer $i\le n$, consider the finite set $Z_i=\{k\cdot e_i:k\in\IZ,\;|k|\le l\}$ in $\IR^n$ and observe that the sum $\tilde K_i:=K_i+Z_i$ is a continuum containing the set $Z_i$ (as $K_i$ contains zero). Let $Z:=Z_1+\dots+Z_n\subset \IZ^n\subset\IR^n$ and observe that $$K+Z=(K_1+\dots+K_n)+(Z_1+\dots+Z_n)=(K_1+Z_1)+\dots+(K_n+Z_n)=\tilde K_1+\dots+\tilde K_n.$$

\begin{claim}\label{claim} $[-s,s]^n\subset K+Z$.
\end{claim}

\begin{proof} To derive a contradiction, suppose that $[-s,s]^n\not\subset K+Z$ and fix a point $(y_i)_{i=1}^n\in [-s,s]^n\setminus (K+Z)$.
For every positive integer $k\le n$ denote by $\pr_k:\IR^n\to\IR$, $\pr_k:(x_i)_{i=1}^n\mapsto x_k$, the coordinate projection.

Also let $$Y_k:=\{x\in\IR^n:\pr_k(x)=y_k\},\;\;Y_k^-:=\{x\in\IR^n:\pr_k(x)<y_k\},\;\;Y_k^+:=\{x\in\IR^n:\pr_k(x)>y_k\}.$$

Consider the continuous map $\Sigma:\prod_{i=1}^n\tilde K_i\to K+Z$, $\Sigma:(x_i)_{i=1}^n\mapsto\sum_{i=1}^nx_i$. For every positive integer $k\le n$ let $p_k:\prod_{i=1}^n\tilde K_i\to\tilde K_k$, $p_k:(x_i)_{i=1}^n\mapsto x_k$, be the coordinate projection. The sets $F_k^-:=p_k^{-1}(-l\cdot e_k)$ and $F_k^+:=p_k^{-1}(l\cdot e_k)$ will be called the negative and positive $k$-faces of the ``cube'' $\tilde K:=\prod_{i=1}^n\tilde K_i$.

We claim that $\Sigma(F_k^{-})\subset Y_k^-$ and $\Sigma(F_k^+)\subset Y_k^+$.
Indeed, for any $\tilde x=(\tilde x_i)_{i=1}^n\in F_k^+\subset \tilde K$ we can find  sequences $(x_i)_{i=1}^{n}\in\prod_{i=1}^nK_i$ and $(z_i)_{i=1}^n\in\prod_{i=1}^nZ_i$ such that $(\tilde x_i)_{i=1}^n=(x_i+z_i)_{i=1}^n$.

Taking into account that $\pr_k(z_i)=0$ for all $i\ne k$, we conclude that
\begin{multline*}
\pr_k\circ\Sigma(\tilde x)=\sum_{i=1}^n\pr_k(\tilde x_i)=\pr_k(\tilde x_k)+\sum_{i\ne k}\pr_k(\tilde x_i)=\pr_k(l\cdot e_k)+\sum_{i\ne k}\pr_k(x_i+z_i)=\\
=l+\sum_{i\ne k}\pr_k(x_i)\ge l-\sum_{i\ne k}\|x_i\|\ge l-(n-1)\delta>s\ge y_k
\end{multline*}
and hence $\Sigma(\tilde x)\in Y_k^+$ and finally $\Sigma(F_k^+)\subset Y_k^+$.

By analogy we can prove that $\Sigma(F_k^-)\subset Y_k^-$. Then $\Sigma^{-1}(Y_k)$ is a separator between the $k$-faces $F_k^-$ and $F_k^+$ of the ``cube'' $\tilde K$.

Since $\bigcap_{k=1}^nY_k=(y_k)_{k=1}^n\notin K+Z=\Sigma(\tilde K)$, the intersection $\bigcap_{k=1}^n\Sigma^{-1}(Y_k)$ is empty, which contradicts Proposition~\ref{t:HL}.
\end{proof}

Now we continue the proof of Theorem~\ref{main}. By Claim~\ref{claim}, $[-s,s]^n\subset K+Z$. Taking into account that the set $Z$ is finite, we conclude that the set $K=K_1+\dots+K_n$ has non-empty interior in $\IR^n$. Moreover, $[-s,s]^n\subset K+Z$ implies $(2s)^n=\lambda([-s,s]^n)\le \lambda(K+Z)\le\lambda(K)\cdot|Z|=\lambda(K)\cdot(2l+1)^n$ and hence $\big(\frac{2s}{2l+1}\big)^n\le\lambda(K)$. Then $\lambda(K)\ge 1$ by the choice of the numbers $l$ and $s$.
\end{proof}

The following corollary of Theorem~\ref{main} implies the first part of Theorem~\ref{t:main}.

\begin{corollary}\label{c1} For a continuum $K\subset\IR^n$ and its $n$-fold sum $K^{+n}$ the following conditions are equivalent:
\begin{enumerate}
\item[\textup{(1)}] $K^{+n}$ has non-empty interior in $\IR^n$;
\item[\textup{(2)}] $K^{+n}$ has positive Lebesgue measure in $\IR^n$;
\item[\textup{(3)}] $K$ is not flat in $\IR^n$;
\item[\textup{(4)}] for any non-zero linear continuous functional $f:\IR^n\to \IR$ the image $f(K)$ has non-empty interior in $\IR$;
\item[\textup{(5)}] $K$ belongs to the family $\mathcal A(\IR^n)=\mathcal B(\IR^n)$;
\item[\textup{(6)}] $K$ belongs to the family $\mathcal C(\IR^n)$.
\end{enumerate}
\end{corollary}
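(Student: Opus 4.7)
The plan is to close the equivalences through a short cycle. I will prove (1)$\Rightarrow$(2)$\Rightarrow$(3)$\Rightarrow$(1) to link the topological and measure-theoretic conditions, handle (3)$\Leftrightarrow$(4) separately via connectedness of $f(K)$, and then verify (1)$\Rightarrow$(5)$\Rightarrow$(6)$\Rightarrow$(3) to tie in the Ger--Kuczma classes.

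First I would dispatch the easy implications. (1)$\Rightarrow$(2) is immediate since every non-empty open subset of $\IR^n$ has positive Lebesgue measure. For (2)$\Rightarrow$(3), if $K$ is flat then $\mathrm{aff}(K)=v_0+V$ for some proper linear subspace $V$ of $\IR^n$, and since $V+\cdots+V=V$, the sum $K^{+n}$ is contained in the affine subspace $nv_0+V$, which has Lebesgue measure zero. The implication (1)$\Rightarrow$(5) is Theorem~\ref{tx} together with the already noted equality $\mathcal{A}(\IR^n)=\mathcal{B}(\IR^n)$, and (5)$\Rightarrow$(6) is the containment \eqref{0x}. For the equivalence (3)$\Leftrightarrow$(4): if $K$ is flat, any non-zero continuous linear $f$ vanishing on the direction subspace of $\mathrm{aff}(K)$ is constant on $K$, so $f(K)$ is a single point with empty interior; conversely $f(K)$ is a compact connected subset of $\IR$, so if it has empty interior it must be a singleton, placing $K$ in the hyperplane $f^{-1}(c)$ and making $K$ flat.

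The substantive step is (3)$\Rightarrow$(1), which I would handle by a direct application of Theorem~\ref{main}. Since $K$ is not flat, $\mathrm{aff}(K)=\IR^n$, hence $K$ contains $n+1$ affinely independent points $a_0,a_1,\dots,a_n$. The translated continuum $K':=K-a_0$ then contains the origin together with the linearly independent vectors $e_i:=a_i-a_0$ for $i=1,\dots,n$. Applying Theorem~\ref{main} with $K_1=\cdots=K_n=K'$ and this choice of $e_1,\dots,e_n$ shows that $(K')^{+n}=K^{+n}-na_0$ has non-empty interior, hence so does $K^{+n}$.

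Finally, for (6)$\Rightarrow$(3) I would argue by contraposition. If $K$ is flat, then $K\subset v_0+V$ for some proper linear subspace $V$ of $\IR^n$. Choose a linear complement $W$ with $\IR^n=V\oplus W$; since $\dim W\ge 1$, a standard Hamel basis construction produces a $\IQ$-linear functional $g:W\to\IR$ that is not $\IR$-linear. Writing each vector as $v+w$ with $v\in V$ and $w\in W$, the formula $f(v+w):=g(w)$ defines a discontinuous additive function on $\IR^n$, and $f$ restricted to $K$ equals the constant $g(w_0)$, where $w_0$ is the $W$-component of $v_0$. Hence $f(K)$ is bounded while $f$ is discontinuous, contradicting $K\in\mathcal{C}(\IR^n)$. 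The only genuine difficulty in the entire argument is (3)$\Rightarrow$(1), and essentially all of that work has already been absorbed into Theorem~\ref{main}; everything else amounts to organized book-keeping.
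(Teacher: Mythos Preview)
Your proof is correct and follows essentially the same route as the paper's: the same cycle of implications, the same reduction of $(3)\Rightarrow(1)$ to Theorem~\ref{main} via a translation placing the origin in $K$, and the same use of connectedness of $f(K)$ for $(3)\Leftrightarrow(4)$. The only cosmetic differences are that the paper phrases $(6)\Rightarrow(3)$ via the quotient $\IR^n/L$ rather than a complementary subspace, and extracts the basis $e_1,\dots,e_n$ by taking a maximal linearly independent subset of the shifted $K$ rather than starting from $n+1$ affinely independent points.
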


\begin{proof} The implications $(1)\Ra(2)\Ra(3)$ are trivial. To prove that  $(3)\Ra(1)$, assume that $K$ is not flat in $\IR^n$. After a suitable shift, we can assume that $K$ contains the origin of the vector space $\IR^n$.

Let $E\subset K$ be a maximal linearly independent subset of $K$. Assuming that $|E|<n$ we would conclude that $K$ is contained in the linear hull of the set $E$ and hence is flat. So, $|E|=n$ and we can write $E$ as $E=\{e_1,\dots,e_n\}\subset K$. Applying Theorem~\ref{main} to the continua $K_i:=K$, $1\le i\le n$, we conclude that the sum $K^{+n}=K_1+\dots+K_n$ has non-empty interior in $\IR^n$.

The equivalence $(3)\Leftrightarrow(4)$ is proved in Lemma~\ref{l:flat} below and
the implication $(1)\Ra(5)$ follows from Proposition~\ref{tx} and $(5)\Ra(6)$ is trivial. To finish the proof, it suffices to show that $(6)\Ra(3)$, which is equivalent to $\neg(3)\Ra\neg(6)$.
So, assume that the continuum $K$ is flat in $\IR^n$. Then $K\subset x+L$ for some $x\in \IR^n$ and some linear subspace $L\subset\IR^n$ of dimension $\dim(L)=n-1$. Let $\IR^n/L$ be the quotient space and $q:\IR^n\to \IR^n/L$ be the quotient linear operator.

Since the quotient space $\IR^n/L$ is topologically isomorphic to $\IR$, it admits a discontinuous additive function $a:\IR^n/L\to\IR$. Then $f=a\circ q:\IR^n\to\IR$ is a discontinuous additive function such that $f(K)\subset f(x+L)=\{f(x)\}$, which means that $K\notin\mathcal C(\IR^n)$.
\end{proof}

\begin{lemma}\label{l:flat} A connected subset $K$ of a locally convex topological vector space $X$ is not flat if and only if for any non-zero linear continuous functional $f:X\to\IR$ the image $f(K)$ has non-empty interior in $X$.
\end{lemma}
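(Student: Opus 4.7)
The plan is to exploit the Hahn--Banach duality between flatness of $K$ and the existence of a non-zero continuous linear functional that is constant on $K$. The geometric fact underlying both directions is that any closed proper affine subspace of a topological vector space automatically has empty interior: an interior point could be translated to $0$, after which the absorbing property of a neighborhood of $0$ would force the subspace to equal all of $X$. In particular such a subspace is nowhere dense.

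For the forward direction, I assume $K$ is not flat and let $f:X\to\IR$ be a non-zero continuous linear functional. Since $K$ is connected and $f$ is continuous, $f(K)$ is a connected subset of $\IR$, i.e.\ an interval. If this interval had empty interior it would degenerate to a single point $\{c\}$, so $K\subset f^{-1}(c)$. But then the affine hull of $K$ would lie in the closed affine hyperplane $f^{-1}(c)$, which is nowhere dense by the fact above, contradicting non-flatness of $K$.

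For the converse I argue contrapositively. Assume $K$ is flat, so that $\mathrm{aff}(K)$ is nowhere dense; then $H:=\overline{\mathrm{aff}(K)}$ is a closed proper affine subspace of $X$. Fixing any $x_0\in K$, the translate $V:=H-x_0$ is a closed proper linear subspace of $X$, and local convexity of $X$ allows me to apply the Hahn--Banach theorem to produce a non-zero continuous linear functional $f:X\to\IR$ that vanishes on $V$. Such an $f$ is constant on $H$ and hence on $K$, so $f(K)$ is a singleton, with empty interior in $\IR$.

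The only substantial tool is the Hahn--Banach separation theorem, and local convexity of $X$ is precisely what makes it available; connectedness of $K$, correspondingly, is used only in the forward direction to force $f(K)$ to be an interval rather than a possibly disconnected bounded set. I do not anticipate any real obstacle beyond these standard inputs.
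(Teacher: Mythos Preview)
Your proof is correct and follows essentially the same route as the paper's own argument: both directions hinge on Hahn--Banach (to pass from a closed proper subspace to a non-zero continuous functional vanishing on it) together with the observation that the continuous image of the connected set $K$ in $\IR$ is an interval, hence has non-empty interior unless it is a singleton. The only cosmetic difference is that for the ``not flat $\Rightarrow$ non-empty interior'' direction you argue by contradiction (singleton image $\Rightarrow$ $K$ lies in a hyperplane $\Rightarrow$ flat), whereas the paper argues directly (not flat $\Rightarrow$ affine hull dense $\Rightarrow$ $f(K)$ contains two distinct points $\Rightarrow$ interval); the content is the same.
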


\begin{proof} If $A$ is flat, then the affine hull of $A$ in $X$ is nowhere dense and hence $A\subset a+L$ for some $a\in A$ and some nowhere dense closed linear subspace of $X$. Using the Hahn-Banach Theorem,  choose a non-zero linear continuous functional $f:X\to\IR$ such that $L\subset f^{-1}(0)$. Then the image $f(A)\subset f(L+a)=\{f(a)\}$ is a~singleton (which has empty interior in the real line).

Now assuming that $A$ is not flat, we shall prove that for any non-zero linear continuous functional $f:X\to\IR$ the image $f(A)$ has non-empty interior in $\IR$. Since $A$ is not flat, its affine hull is dense in $X$ and hence the affine hull of $f(A)$ is dense in $\IR$. This implies that $f(A)$ contains two distinct points $a<b$. By the connectedness of $f(A)$ (which follows from the connectedness of $A$ and the continuity of $f$), the set $f(A)$ contains the interval $[a,b]$ and hence has non-empty interior in $\IR$.
\end{proof}

\begin{problem} Is there a compact subset $K\subset \IR^2$ such that $K+K$ has empty interior in $\IR^2$ but for any non-zero linear continuous functional $f:X\to\IR$ the image $f(K)$ has non-empty interior in $\IR$?
\end{problem}



Subsets $A_1,\dots,A_n$ of $\IR^n$ will be called {\em collectively nowhere flat in $\IR^n$} if  any non-empty relatively open subsets $U_1\subset A_1$, \dots, $U_n\subset A_n$ contain points $a_1,b_1\in U_1$, \dots, $a_n,b_n\in U_n$ such that the vectors $b_1-a_1,\dots,b_n-a_n$ form a basis of the linear space $\IR^n$.

For example, for a basis $e_1,\dots,e_n$ of $\IR^n$ with $n\ge 2$ the closed intervals $[0,1]\cdot e_1,\dots[0,1]\cdot e_n$ are collectively nowhere flat; yet each set $[0,1]\cdot e_i$ separately is flat.

It is easy to see that a subspace $A\subset\IR^n$ is nowhere flat in $\IR^n$ if and only if the sequence of $n$ its copies $A_1=A,\dots,A_n=A$ is collectively nowhere flat in $\IR^n$.

\begin{theorem}\label{t:main2} Let $K_1,\dots,K_n$ are collectively nowhere flat locally connected subspaces of $\IR^n$. For every non-meager subsets $B_1,\dots,B_n$ in $K_1,\dots,K_n$ the algebraic sum $B_1+\dots+B_n$ is non-meager in $\IR^n$.
\end{theorem}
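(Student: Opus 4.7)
I argue by contradiction: suppose $S:=B_1+\dots+B_n\subset\bigcup_{k\in\IN}F_k$ for closed nowhere dense $F_k\subset\IR^n$, and work with the continuous sum map $\Sigma:\prod_{i=1}^nK_i\to\IR^n$, $\Sigma(x_1,\dots,x_n)=\sum_ix_i$. The plan is to show that each $\Sigma^{-1}(F_k)$ is nowhere dense in a carefully chosen open part of the product, and then to invoke the Kuratowski-Ulam theorem to conclude that some $B_i$ must be meager in $K_i$, which contradicts the hypothesis.

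\textbf{Localization.} Since each $K_i$ is second countable (being a subspace of $\IR^n$), the union $O_i:=\bigcup\{U\subset K_i\text{ open}:B_i\cap U\text{ is meager in }K_i\}$ satisfies $B_i\cap O_i$ meager, hence $O_i$ cannot be dense in $K_i$. Choosing a nonempty open subset of $K_i\setminus\overline{O_i}$, intersecting with a small Euclidean ball, and taking an open connected component (possible by local connectedness of $K_i$) produces a nonempty, bounded, connected, open $\tilde V_i\subset K_i$ with the property that $B_i\cap U$ is non-meager in $K_i$ for every nonempty open $U\subset\tilde V_i$.

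\textbf{Key lemma.} For every closed nowhere dense $F\subset\IR^n$, the set $\Sigma^{-1}(F)\cap\prod_i\tilde V_i$ is nowhere dense in $\prod_i\tilde V_i$. Otherwise its interior contains a basic box $W_1\times\dots\times W_n$ with $W_i\subset\tilde V_i$ nonempty open; local connectedness lets me refine each $W_i$ to a connected open $W_i'\subset W_i$, and the collectively-nowhere-flat hypothesis provides $a_i,b_i\in W_i'$ with $e_i:=b_i-a_i$ a basis of $\IR^n$. The Euclidean closures $C_i:=\overline{W_i'}$ are continua (connected, closed, bounded), and the translates $C_i-a_i$ all contain the origin together with the linearly independent points $e_i$, so Theorem~\ref{main} forces $\sum_i(C_i-a_i)$, hence also $C_1+\dots+C_n$, to have non-empty interior in $\IR^n$. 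Boundedness of the $W_i'$ gives $C_1+\dots+C_n=\overline{W_1'+\dots+W_n'}$, and since $W_1'+\dots+W_n'\subset F$ and $F$ is closed, $C_1+\dots+C_n\subset F$, contradicting the nowhere-density of $F$.

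\textbf{Conclusion via Kuratowski-Ulam.} Combining the key lemma with the inclusion $(B_1\cap\tilde V_1)\times\dots\times(B_n\cap\tilde V_n)\subset\Sigma^{-1}(S)\cap\prod_i\tilde V_i\subset\bigcup_k\Sigma^{-1}(F_k)\cap\prod_i\tilde V_i$ makes the product on the left meager in $\prod_i\tilde V_i$. Now iterate the Kuratowski-Ulam theorem (which holds for second countable spaces without any Baire assumption), using the general fact that the intersection of a non-meager set with a comeager set is non-meager: first produce $x_1\in B_1\cap\tilde V_1$ for which the slice $(B_2\cap\tilde V_2)\times\dots\times(B_n\cap\tilde V_n)$ is meager, then $x_2\in B_2\cap\tilde V_2$ making the next slice meager, and so on, ending with $B_n\cap\tilde V_n$ meager in $\tilde V_n$. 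Since $\tilde V_n$ is open in $K_n$, meagerness in $\tilde V_n$ coincides with meagerness in $K_n$ for subsets of $\tilde V_n$, contradicting the localization property. The main obstacle is the key lemma: one must carefully refine an arbitrary basic box to connected subboxes whose Euclidean closures really are continua, invoke Theorem~\ref{main} to obtain a non-empty interior, and then use the closure identity $\overline{W_1'+\dots+W_n'}=C_1+\dots+C_n$ (which needs boundedness) to collide that interior with the nowhere-density of $F$.
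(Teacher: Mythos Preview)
Your proof is correct and shares the paper's core geometric step: localize to small connected open pieces, pass to their Euclidean closures (which are continua), and invoke Theorem~\ref{main} together with the collectively-nowhere-flat hypothesis to manufacture a set with nonempty interior inside a nowhere dense $F$. The difference lies only in the Baire-category bookkeeping. The paper first replaces each $B_i$ by a dense Baire subspace of an open $W_i\subset K_i$, cites the fact that a product of second-countable Baire spaces is Baire, and applies the Baire Category Theorem directly in $\prod_i B_i$ to find a single $F_i$ whose preimage has nonempty interior there; density of $B_i$ in $W_i$ then lets one pass to the closures $\overline{V_k}$. Your route instead isolates a clean ``Key Lemma'' (each $\Sigma^{-1}(F)$ is nowhere dense in $\prod_i\tilde V_i$), proves it by exactly the same continuum argument, and then uses Kuratowski--Ulam iteratively to force some factor $B_n\cap\tilde V_n$ to be meager. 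Your version trades the product-of-Baire-spaces citation for Kuratowski--Ulam, and packages the geometric content into a reusable lemma; the paper's version is a bit more streamlined since the Baire Theorem delivers the offending box in one stroke. Both are standard and equally valid.
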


\begin{proof}  To derive a contradiction, assume that the sum $B_1+\dots+B_n$ is meager and hence is contained in the countable union $\bigcup_{i\in\w}F_i$ of closed nowhere dense subsets of $\IR^n$. Consider the continuous map $\Sigma:
(\IR^n)^n\to\IR^n$, $\Sigma:(x_k)_{k=1}^n\mapsto\sum_{k=1}^nx_k$.  Taking into account that for every $i\le n$ the subset $B_i$ is not meager in $K_i$, we can apply a classical result of Banach \cite[\S 10.V]{Ku} and find a non-empty open set $W_i\subset K_i$  such that the intersection $W_i\cap B_i$ is a dense Baire subspace of $W_i$. Replacing $W_i$ by a smaller open subset of $W_i$, we can assume that the set $W_i$ is bounded in $\IR^n$. Replacing $B_i$ by $B_i\cap W_i$, we can assume that $B_i$ is a dense Baire space in $W_i$.

By \cite[8.44]{Kechris}, the product $\prod_{k=1}^n B_k$ of second countable Baire spaces $B_k$ is Baire. Since $\prod_{k=1}^n B_k\subset\bigcup_{i\in\w}\Sigma^{-1}(F_i)$, we can apply Baire Theorem and find $i\in\w$ such that the set $\Sigma^{-1}(F_i)\cap\prod_{k=1}^nB_k$ has non-empty interior in $\prod_{k=1}^n B_k$. Then we can find non-empty open sets $V_1\subset W_1,\dots,V_n\subset W_n$ such that $\prod_{k=1}^n(B_k\cap V_k)\subset\Sigma^{-1}(F_i)$. Since the spaces $K_1,\dots,K_n$ are locally connected, we can additionally assume that each set $V_k$ is connected and hence has compact connected closure $\overline{V}_k$ in $\IR^n$. The set $\Sigma^{-1}(F_i)$ is closed and hence contains the closure $\prod_{k=1}^n \overline{V}_k$ of the set $\prod_{k=1}^n(V_k\cap B_k)$ in $(\IR^n)^n$. Then the set $\overline{V}_1+\dots+\overline{V}_n=\Sigma(\prod_{k=1}^n\overline{V}_k)\subset F_i$ is nowhere dense in $\IR^n$.

On the other hand, taking into account that the sets $K_1,\dots,K_n$ are collectively nowhere flat in $\IR^n$, in each set $\overline{V}_k$ we can choose two points $a_k,b_k$ such that the vectors $b_1-a_1,\dots,b_n-a_n$ form a basis of the vector space $\IR^n$. Applying Theorem~\ref{main}, we can conclude that the set $\overline{V}_1+\dots+\overline{V}_n$ has non-empty interior and hence cannot be contained in the nowhere dense set $F_i$. This contradiction completes the proof.
\end{proof}

The following corollary of Theorem~\ref{t:main2} yields the second part of Theorem~\ref{t:main}.

\begin{corollary}\label{c:A} Let $K$ be a nowhere flat locally connected subset of $\IR^n$ and $A$ be a non-meager analytic subspace of $K$. Then the $n$-fold sum $A^{+n}$ of $A$ is a non-meager analytic subset of $\IR^n$, the $2n$-fold sum  $A^{+2n}$ has non-empty interior and the set $(A-A)^{+n}$
 is a~neighborhood of zero in $\IR^n$. Moreover, the set $A$ belongs to the families $\mathcal A(\IR^n)=\mathcal B(\IR^n)\subset\mathcal C(\IR^n)$.
 \end{corollary}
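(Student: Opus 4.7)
The plan is to deduce the corollary from Theorem~\ref{t:main2} together with a Pettis-style Baire-category argument, and then to feed the conclusion into Theorem~\ref{tx}. First I would note that $A^{+n}$ is analytic, being the image of the analytic set $A^n$ under the continuous sum map $\Sigma\colon(\IR^n)^n\to\IR^n$. Because $K$ is nowhere flat and locally connected, the sequence of $n$ copies $K_1=\dots=K_n:=K$ is collectively nowhere flat (by the equivalence noted just before Theorem~\ref{t:main2}), so Theorem~\ref{t:main2} applied with $B_1=\dots=B_n:=A$ delivers that $A^{+n}$ is non-meager in $\IR^n$. This already gives the first assertion.

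Next I would invoke the classical fact that every analytic subset of $\IR^n$ has the Baire property (Nikodym; see e.g.~\cite{Kechris}). Writing $E:=A^{+n}$, the set $E$ is non-meager and Baire-measurable, hence by the classical Pettis theorem the difference $E-E$ is a neighborhood of the origin; since $E-E=(A-A)^{+n}$, this gives the claim about $(A-A)^{+n}$. For the interior of $A^{+2n}=E+E$ I would run the analogous additive argument: express $E=U\triangle M$ with $U$ non-empty open and $M$ meager, and observe that for each $z\in U+U$ the open set $V_z:=U\cap(z-U)$ is non-empty, while both $E\cap V_z$ and $(z-E)\cap V_z$ are comeager in $V_z$, so their intersection produces an $x\in E$ with $z-x\in E$, and hence $z=x+(z-x)\in E+E$. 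Consequently $U+U\subset A^{+2n}$, which exhibits a non-empty open subset of $A^{+2n}$.

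Finally, the membership in the Ger--Kuczma classes follows immediately: since $A^{+2n}$ has non-empty interior in $\IR^n$, Theorem~\ref{tx} (applied with $n$ replaced by $2n$) places $A$ in $\mathcal A(\IR^n)=\mathcal B(\IR^n)\subset\mathcal C(\IR^n)$. The one non-automatic point, and therefore the step I would write out most carefully, is the additive Pettis argument giving $U+U\subset E+E$; every other step is a direct citation of Theorem~\ref{t:main2}, Theorem~\ref{tx}, or a standard fact from descriptive set theory.
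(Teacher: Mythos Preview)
Your argument is correct and follows essentially the same route as the paper: apply Theorem~\ref{t:main2} to the $n$ copies of $K$ (noting they are collectively nowhere flat) to get $A^{+n}$ non-meager, observe that $A^{+n}$ is analytic and hence has the Baire property, then invoke Pettis--Piccard to obtain both that $(A-A)^{+n}=A^{+n}-A^{+n}$ is a neighborhood of $0$ and that $A^{+2n}=A^{+n}+A^{+n}$ has non-empty interior, and finally appeal to Theorem~\ref{tx}. The only cosmetic differences are that the paper first performs an (ultimately unnecessary) reduction to the case where $A$ is dense Baire in $K$, and simply cites Pettis--Piccard for the $E+E$ statement rather than writing out the $U\triangle M$ argument you give.
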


 \begin{proof} Since the set $A$ is non-meager in $K$, by \cite[\S 10.V]{Ku}, there exists a non-empty open set $V\subset K$ such that $V\cap A$ is a~dense Baire subspace of $V$. Since $K$ is locally connected, we can assume that $V$ is connected and so is its closure $\overline{V}$ in $K$. Replacing $K$ by $\overline{V}$ and $A$ by $A\cap V$, we can assume that $A$ is a dense Baire subspace of $K$. By Theorem~\ref{t:main2}, the $n$-fold sum $A^{+n}$ of $A$ is a non-meager subset of $\IR^n$. The subspace $A^{+n}$ is analytic, being a continuous image of the analytic space $A^n$. Applying Pettis-Piccard Theorem \cite[Corollary 5]{Pet}, \cite{Pic} (also~\cite[Theorem~1]{Kominek}), we can conclude that the sum $A^{+n}+A^{+n}=A^{+2n}$ has non-empty interior and the difference $A^{+n}-A^{+n}$ is a~neighborhood of zero in $\IR^n$.

Since $A^{+2n}$ has non-empty interior in $\IR^n$, we can apply Proposition~\ref{tx} and conclude that $A\in\mathcal A(\IR^n)=\mathcal B(\IR^n)\subset\mathcal C(\IR^n)$.
 \end{proof}

 Writing down the definition of the class $\mathcal A(\IR^n)$ and applying Corollaries~\ref{c1} and \ref{c:A}, we obtain the following characterization.

\begin{corollary}\label{c4} For a mid-convex function $f:D\to\IR$ defined on a non-empty open convex set $D\subset\IR^n$, the following conditions are  equivalent:
\begin{enumerate}
\item[\textup{(1)}] $f$ is continuous;
\item[\textup{(2)}] $f$ is upper bounded on some non-flat continuum $K\subset D$;
\item[\textup{(3)}] $f$ is upper bounded on some non-meager analytic subspace of a nowhere flat locally connected subset of~$D$.
\end{enumerate}
\end{corollary}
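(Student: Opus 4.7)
The plan is to establish the equivalences by combining the definition of the Ger--Kuczma class $\mathcal A(\IR^n)$ with Corollaries~\ref{c1} and~\ref{c:A}. Recall that a subset $T\subset\IR^n$ belongs to $\mathcal A(\IR^n)$ exactly when every mid-convex function $f:D\to\IR$, defined on an open convex set $D\supset T$, is continuous whenever $\sup f(T)<\infty$. So each of the implications (2)$\Rightarrow$(1) and (3)$\Rightarrow$(1) reduces to verifying that the set on which $f$ is upper bounded actually lies in $\mathcal A(\IR^n)$.

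For (2)$\Rightarrow$(1), let $K\subset D$ be the non-flat continuum on which $f$ is upper bounded. By Corollary~\ref{c1}, the equivalence (3)$\Leftrightarrow$(5) gives $K\in\mathcal A(\IR^n)$, so the definition of $\mathcal A(\IR^n)$ applied to the open convex set $D$ containing $K$ yields the continuity of $f$. For (3)$\Rightarrow$(1), let $K\subset D$ be a nowhere flat locally connected subset and $A\subset K$ a non-meager analytic subspace on which $f$ is upper bounded. Corollary~\ref{c:A} states precisely that $A\in\mathcal A(\IR^n)=\mathcal B(\IR^n)$, so again the definition of $\mathcal A(\IR^n)$ forces $f$ to be continuous on $D$.

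The reverse implications (1)$\Rightarrow$(2) and (1)$\Rightarrow$(3) are easy: since $D$ is a non-empty open subset of $\IR^n$, it contains a closed ball $\overline{B}(x_0,r)$ of positive radius. This ball is a continuum whose affine hull is all of $\IR^n$, so it is non-flat; it is also locally connected, and every relatively open subset of $\overline{B}(x_0,r)$ has non-empty interior in $\IR^n$, hence is non-flat, so $\overline{B}(x_0,r)$ is nowhere flat. Taking $A=\overline{B}(x_0,r)$ itself (a non-meager Borel, hence analytic, subspace of itself), we obtain a witness for (3). In either case, a continuous function $f$ on $D$ is bounded on the compact set $\overline{B}(x_0,r)$, which gives both (2) and (3).

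I expect no real obstacle here; the content has already been established in the preceding corollaries, and the corollary is essentially a repackaging. The only minor care is to verify that the chosen closed ball is indeed a nowhere flat locally connected continuum so that Corollary~\ref{c:A} applies in the (1)$\Rightarrow$(3) direction.
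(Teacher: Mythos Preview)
Your proof is correct and follows exactly the approach the paper indicates: the paper's own argument is simply the sentence ``Writing down the definition of the class $\mathcal A(\IR^n)$ and applying Corollaries~\ref{c1} and~\ref{c:A}, we obtain the following characterization,'' and you have spelled out precisely those details. The use of a closed ball to witness the implications (1)$\Rightarrow$(2) and (1)$\Rightarrow$(3) is the natural choice and is fully justified.
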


Now we present an example showing that the condition of (collective) nowhere flatness in Corollaries~\ref{c:A}, \ref{c4} (and Theorem~\ref{t:main2}) is essential.

\begin{example} Let $C=\big\{\sum_{n=1}^\infty \frac{2x_n}{3^n}:(x_n)_{n=1}^\infty\in\{0,1\}^{\IN}\big\}$ be the standard Cantor set in the interval $[0,1]$.
Let $f:C\to[0,1]$ be the continuous map assigning to each point $\sum_{n=1}^\infty \frac{2x_n}{3^n}$ of the Cantor set $C$ the real number $\sum_{n=1}^\infty\frac1{2^n}$. Let $\bar f:[0,1]\to[0,1]$ be the unique monotone function extending $f$. The function $\bar f$ is known as {\em Cantor ladders}. It is uniquely defined by the condition $\bar f^{-1}(y)=\mathrm{conv}(f^{-1}(y))$ for $y\in[0,1]$. Let $\Gamma_f:=\{(x,f(x)):x\in C\}$ and $\Gamma_{\bar f}:=\{(x,\bar f(x)):x\in [0,1]\}$ be the graphs of the functions $f$ and $\bar f$. The set $\Gamma_f$ is nowhere flat and zero-dimensional, and the set $\Gamma_{\bar f}$ is connected but not nowhere flat in the plane $\IR\times\IR$.

It is easy to see that $A:=\Gamma_{\bar f}\setminus\Gamma_f$ is an open dense subset of $\Gamma_{\bar f}$. By Corollary~\ref{c1} (see also the proof of Theorem 9.5.2 in \cite{Kuczma2}), the sum $S:=\Gamma_{\bar f}+\Gamma_{\bar f}$ has non-empty interior in the plane $\IR^2$. On the other hand, the sum $A+A$ is meager in $\mathbb R^2$. Moreover, since $A+A$ is contained in the union of countably many parallel lines in $\IR\times\IR$, the $\IQ$-linear hull of $A$ has uncountable codimension in $\IR^2$, which allows us to construct a discontinuous additive function $a:\IR^2\to\IR$ such that $a(A)=\{0\}$, witnessing that $A\notin\mathcal C(\IR^2)$.
\end{example}







\end{document}